\tikzstyle{punkt}=[circle, fill=black, minimum size=1mm,inner sep=0pt, draw]
\def\frk{\mathfrak}               
\def\Phi{{\frk N}}
\def\opn#1#2{\def#1{\operatorname{#2}}} 
\opn\chara{char} \opn\length{\ell} \opn\pd{pd} \opn\rk{rk}
\opn\projdim{proj\,dim} \opn\injdim{inj\,dim} \opn\rank{rank}
\opn\depth{depth} \opn\grade{grade} \opn\height{height}
\opn\embdim{emb\,dim} \opn\codim{codim}
\opn\Tr{Tr} \opn\bigrank{big\,rank}
\opn\superheight{superheight}\opn\lcm{lcm}
\opn\trdeg{tr\,deg}
\opn\reg{reg} \opn\lreg{lreg} \opn\ini{in} \opn\lpd{lpd}
\opn\size{size}\opn{\mult}{mult}
\opn\div{div} \opn\Div{Div} \opn\cl{cl} \opn\Cl{Cl}
\opn\Spec{Spec} \opn\Supp{Supp} \opn\supp{supp} \opn\Sing{Sing}
\opn\Ass{Ass} \opn\Min{Min}
\opn\Ann{Ann} \opn\Rad{Rad} \opn\Soc{Soc}
\opn\Syz{Syz} \opn\Im{Im} \opn\Ker{Ker} \opn\Coker{Coker}
\opn\Am{Am} \opn\Hom{Hom} \opn\Tor{Tor} \opn\Ext{Ext}
\opn\End{End} \opn\Aut{Aut} \opn\id{id} \opn\ini{in}
\opn\nat{nat}
\opn\pff{pf}
\opn\Pf{Pf} \opn\GL{GL} \opn\SL{SL} \opn\mod{mod} \opn\ord{ord}
\opn\Gin{Gin}
\opn\Hilb{Hilb}\opn\adeg{adeg}\opn\std{std}\opn\ip{infpt}
\opn\Pol{Pol}
\opn\sat{sat}
\opn\Var{Var}
\opn\Gen{Gen}
\opn\aff{aff} \opn\con{conv} \opn\relint{relint} \opn\st{st}
\opn\lk{lk} \opn\cn{cn} \opn\core{core} \opn\vol{vol}
\opn\link{link} \opn\star{star}
\opn\gr{gr}
\def\pot#1#2{#1[\kern-0.28ex[#2]\kern-0.28ex]}
\opn\dirlim{\underrightarrow{\lim}}
\opn\inivlim{\underleftarrow{\lim}}
\def\Implies{\ifmmode\Longrightarrow \else
        \unskip${}\Longrightarrow{}$\ignorespaces\fi}
\def\implies{\ifmmode\Rightarrow \else
        \unskip${}\Rightarrow{}$\ignorespaces\fi}
\def\iff{\ifmmode\Longleftrightarrow \else
        \unskip${}\Longleftrightarrow{}$\ignorespaces\fi}
\newtheorem{Theorem}{Theorem}[section]
\newtheorem{Corollary}[Theorem]{Corollary}
\newtheorem{Remark}[Theorem]{Remark}
\newtheorem{Example}[Theorem]{Example}
\let\epsilon\varepsilon
\let\phi=\varphi
\let\kappa=\varkappa
\def\qed{\ifhmode\textqed\fi
      \ifmmode\ifinner\quad\qedsymbol\else\dispqed\fi\fi}
\def\textqed{\unskip\nobreak\penalty50
       \hskip2em\hbox{}\nobreak\hfil\qedsymbol
       \parfillskip=0pt \finalhyphendemerits=0}
\def\dispqed{\rlap{\qquad\qedsymbol}}
\opn\dist{dist}
\def\pnt{{\raise0.5mm\hbox{\large\bf.}}}
\opn\Lex{Lex}
\opn\diam{diam}
\begin{document}
\title{Diameter and connectivity of finite simple graphs~II}
\author[T.~Hibi]{Takayuki Hibi}
\address[Takayuki Hibi]
{Department of Pure and Applied Mathematics, 
Graduate School of Information Science and Technology, 
Osaka University, 
Suita, Osaka 565-0871, Japan}
\email{hibi@math.sci.osaka-u.ac.jp}
\author[S.~Saeedi~Madani]{Sara Saeedi Madani}
\address[Sara Saeedi Madani]
{Department of Mathematics and Computer Science, Amirkabir University of Technology, Tehran, Iran, and School of Mathematics, Institute for Research in Fundamental Sciences, Tehran, Iran} 
\email{sarasaeedi@aut.ac.ir}
\subjclass[2010]{05C12, 05C40}
\keywords{Vertex connectivity, diameter, free vertices.}

\begin{abstract}
	Let $G$ be a finite simple non-complete connected graph on $[n] = \{1, \ldots, n\}$ and $\kappa(G) \geq 1$ its vertex connectivity.  Let $f(G)$ denote the number of free vertices of $G$ and $\diam(G)$ the diameter of $G$. The final goal of this paper is to determine all sequences of integers $(n,f,d,k)$ with $n\geq 8$, $f\geq 0$, $d\geq 2$ and $k\geq 1$ for which there exists a finite simple non-complete connected graph on $[n]$ with $f=f(G)$, $d=\diam (G)$ and $k=\kappa(G)$.
\end{abstract}
\maketitle
\section*{Introduction}

In this paper, we are interested in the diameter, connectivity and the number of free vertices of a finite simple non-complete connected graph and how they are related to each other. Even though the approach of this paper is purely combinatorial, our initial motivation has arisen from the combinatorial study of binomial edge ideals \cite{HHHKR, Ohtani}, where these graphical invariants are closely related to a classical invariant in commutative algebra.  

Let $G$ be a finite simple connected graph on the vertex set $[n] = \{1, \ldots, n\}$ with $n\geq 3$ and the edge set $E(G)$.  The {\em distance} $\dist_G(i,j)$ of $i$ and $j$ in $[n]$ with $i \neq j$ is the smallest length, (i.e. the number of edges) of paths connecting $i$ and $j$ in $G$.  Especially, if $\{i, j\} \in E(G)$, then $\dist_G(i,j) = 1$.  The {\em diameter} of $G$ is 
\[
\diam(G) = \max \{\dist_G(i,j) : i, j \in [n] \}.
\]

The {\em induced subgraph} of $G$ on $T \subset [n]$ is the subgraph $G_T$ of $G$ on the vertex set $T$ whose edges are those $\{i, j\} \in E(G)$ with $ i \in T$ and $j \in T$.  Let $\kappa(G) \geq 1$ denote the {\em vertex connectivity} of $G$.  In other words, $\kappa(G)$ is the smallest cardinality of $T \subset [n]$ for which 
$G_{[n]\setminus T}$ is disconnected.  

A vertex $i \in [n]$ is {\em free} if $\{i, j\} \in E(G)$ and $\{i, j'\} \in E(G)$ with $j \neq j'$, yield $\{j, j'\} \in E(G)$. In particular, a free vertex belongs to exactly one maximal clique (i.e., complete subgraph) of $G$. If $\Delta(G)$ is the clique complex of $G$, i.e. the simplicial complex whose faces are the vertex sets of the cliques of $G$, then a vertex is a free vertex of $G$ if and only if it belongs to exactly one facet of $\Delta(G)$. Such a vertex is sometimes called a \emph{simplicial vertex}. If a free vertex has degree equal to~$1$, then it is just the same as a \emph{leaf} in classical graph theory. Free vertices of graphs play an important role in the literature of monomial and binomial edge ideals, especially in the case of chordal graphs, see for example \cite{HH}. Let $f(G)$ denote the number of free vertices of $G$.

In the study of binomial edge ideals \cite{HHHKR, Ohtani}, it was shown in \cite{BN, RSK} that, when $G$ is non-complete, one has
\begin{eqnarray}
\label{inequality}
f(G) + \diam(G) \leq n + 2 - \kappa(G).
\end{eqnarray}

Our final goal is to determine the possible sequences $(n, f, d, k)$ of integers for which there is a finite simple non-complete connected graph $G$ on $[n]$ with $f = f(G)$, $d = \diam(G)$ and $k = \kappa(G)$.  Toward our goal, in the previous paper \cite{partI}, the possible sequences $(n, f, d, k)$ of integers for which there is a finite simple non-complete connected graph $G$ on $[n]$ with $f = f(G)$, $d = \diam(G)$ and $k = \kappa(G)$ satisfying $f + d = n + 2 - k$ was determined.  Furthermore, finite simple non-complete connected graphs $G$ on $[n]$ satisfying $f(G) + \diam(G) = n + 2 - \kappa(G)$ were classified. To simplify the notation, we let 
\[
\phi(G)=f(G)+\diam (G)+ \kappa(G)
\]
and 
\[
\delta(G)=(f(G),\diam (G),\kappa(G)).
\]
The present paper is a continuation of \cite{partI}.  First, in Section~\ref{bound and minimum}, a purely combinatorial proof of (\ref{inequality}) is given and also a classification of graphs $G$ with $\phi(G)=3$, i.e. the minimum possible value for $\phi(G)$, is provided.  Second, in Section~\ref{possible valus}, for each integer $4 \leq i \leq n + 1$, a finite simple non-complete connected graph $G$ on $[n]$ with $\phi(G) = i$ is constructed. Moreover, given an integer $n\geq 8$, all possible sequences $\delta=(f, d, k)$ of integers for which there is a finite simple non-complete connected graph $G$ on $[n]$ with $\delta=\delta(G)$ and $\phi(G)=4, 5, 6$ are determined. In Section~\ref{complete list}, based on the computations done in the previous sections, we provide the complete list of all possible tripes $\delta(G)$, which is our main goal in this paper. Finally, we reprove the classifications of $\delta(G)$ with $\phi(G)=n+1, n+2$, given in \cite{Indian} and \cite{partI}, respectively. 

Throughout the present paper, all graphs are finite and simple, and we denote the vertex set and the edge set of a finite simple graph $G$ by $V(G)$ and $E(G)$, respectively, unless we mention something else.

\section{Bounds on $\phi(G)$ and its minimum possible value}\label{bound and minimum}

In this section, first we give an independent and totally combinatorial proof for the inequality~(\ref{inequality}). We need to recall some notation and terminologies from graph theory. 

Recall that a connected graph $G$ with $n$ vertices is called $k$-{\em connected} for an integer $k$ with $1\leq k<n$, if for each $T\subseteq V(G)$ with $|T|<k$, the induced subgraph $G_{[n]\setminus T}$ of $G$ is connected. In particular, any connected graph $G$ is $\kappa(G)$-connected, but not $(\kappa(G)+1)$-connected.   

Let $P:u_1,u_2,\ldots, u_{\ell}$ be a path of length $\ell-1$ in a graph. Then we say that $P$ is a $(u_1,u_{\ell})$-path and $u_2,\ldots,u_{\ell-1}$ are {\em internal} vertices of $P$. Then, two paths $P:u_1,u_2,\ldots, u_{\ell}$ and $Q:v_1,v_2,\ldots, v_s$ are called {\em internally disjoint} if they do not have any common internal vertex. It is a well known fact in graph theory that a connected graph $G$ is $k$-connected if and only if for any two distinct vertices $u$ and $v$ there exist at least $k$ internally disjoint $(u,v)$-paths in $G$.    
 
\begin{Theorem}\label{Combinatorial proof}
\label{Theorem}
Let $G$ be a finite simple non-complete connected graph on $[n]$. Then 
\[
f(G) + \diam(G) \leq n + 2 - \kappa(G).
\] 
In particular, 
\[
3\leq \phi(G)\leq n+2.
\]
\end{Theorem}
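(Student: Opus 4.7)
The plan is to exhibit at least $d+k-2$ non-free vertices in $G$, where $d=\diam(G)$ and $k=\kappa(G)$, from which $f(G)\leq n-(d+k-2)$, i.e.\ the desired inequality, follows immediately.

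First, I would pick $u,v\in V(G)$ realizing $\dist_G(u,v)=d$ and fix a shortest $u$-$v$ path $P_1:u=u_0,u_1,\ldots,u_d=v$. For every interior index $1\leq j\leq d-1$, the neighbors $u_{j-1},u_{j+1}\in N(u_j)$ are non-adjacent, since otherwise the path could be shortened in contradiction with the minimality of $P_1$. Thus each such $u_j$ fails to be a free vertex, producing $d-1$ non-free vertices on $P_1$.

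Next, using the Menger-type fact recalled just before the statement, I would choose $k-1$ further $u$-$v$ paths $P_2,\ldots,P_k$ so that $P_1,\ldots,P_k$ are pairwise internally disjoint. The key claim, which I expect to be the main obstacle, is that for each $i\geq 2$ the path $P_i$ carries at least one non-free internal vertex. To prove it, write $P_i:u=w_0,w_1,\ldots,w_\ell=v$ with $\ell\geq d\geq 2$ and suppose for contradiction that every $w_j$ with $1\leq j\leq \ell-1$ is free. I then induct on $j$ to show $w_0 w_j\in E(G)$: the base case is the edge $w_0 w_1$ of $P_i$, and if $w_0\sim w_{j-1}$ while $w_{j-1}$ is free, then $w_0$ and $w_j$ both lie in the clique $N(w_{j-1})$, so $w_0\sim w_j$. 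Taking $j=\ell$ yields $u\sim v$, contradicting $\dist_G(u,v)=d\geq 2$.

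Since the internal vertex sets of $P_1,\ldots,P_k$ are pairwise disjoint, combining the two steps produces at least $(d-1)+(k-1)=d+k-2$ distinct non-free vertices, whence $\phi(G)\leq n+2$. The lower bound $\phi(G)\geq 3$ in the "in particular" clause is then immediate from $f(G)\geq 0$, $\kappa(G)\geq 1$ (since $G$ is connected), and $\diam(G)\geq 2$ (since $G$ is non-complete). Once the clique-propagation argument of the third paragraph is set up, the rest is essentially bookkeeping.
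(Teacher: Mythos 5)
Your overall strategy (count non-free vertices along a system of internally disjoint $u$--$v$ paths) is the same as the paper's, and your clique-propagation lemma in the third paragraph is correct and appealing. However, there is a genuine gap in the second step: it is not true in general that a \emph{prescribed} shortest $u$--$v$ path $P_1$ can be completed by $k-1$ further paths to a pairwise internally disjoint family. Menger's theorem guarantees the existence of \emph{some} family of $k$ internally disjoint $u$--$v$ paths, but that family need not contain your chosen $P_1$. Concretely, take $G$ on $\{u,v,x_1,x_2,a,b\}$ with edges $\{u,x_1\}$, $\{x_1,x_2\}$, $\{x_2,v\}$, $\{u,a\}$, $\{a,x_2\}$, $\{x_1,b\}$, $\{b,v\}$. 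One checks that $\kappa(G)=2$ and $\dist_G(u,v)=3$, and $P_1: u,x_1,x_2,v$ is a shortest path; but $\{x_1,x_2\}$ separates $u$ from $v$, so \emph{every} other $u$--$v$ path meets the interior of $P_1$, and no second internally disjoint path exists for this choice of $P_1$. So the sentence ``I would choose $k-1$ further $u$--$v$ paths $P_2,\ldots,P_k$ so that $P_1,\ldots,P_k$ are pairwise internally disjoint'' invokes a false statement.

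The repair is short and essentially lands you on the paper's proof: start from the Menger family $Q_1,\ldots,Q_k$ of internally disjoint $u$--$v$ paths (rather than from a hand-picked shortest path), and shortcut each $Q_i$ to an \emph{induced} $u$--$v$ path whose internal vertices form a subset of those of $Q_i$; internal disjointness is preserved, each resulting path still has length at least $d=\dist_G(u,v)$, and on an induced path every internal vertex has two non-adjacent neighbours, hence is non-free. This yields at least $k(d-1)$ non-free vertices, which is the paper's count and is at least your $(d-1)+(k-1)=d+k-2$ since $k\geq 1$ and $d\geq 2$; alternatively, your propagation lemma applied to each $Q_i$ gives one non-free internal vertex per path and an induced shortest path gives $d-1$, recovering your weaker but still sufficient bound. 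The ``in particular'' clause is handled correctly.
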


\begin{proof}
For simplicity, let $f=f(G)$, $d=\diam(G)$ and $k=\kappa(G)$. Since $G$ is connected, we have $k\geq 1$, and hence $d-2\leq k(d-2)$. Therefore, we have 
\begin{equation}\label{d-2+k}
d-2+k\leq k(d-2)+k=k(d-1).
\end{equation}	
Now, let $u$ and $v$ be two vertices of $G$ such that $\dist_G(u,v)=d$. Since $G$ is $k$-connected, there exist at least $k$ induced internally disjoint paths between $u$ and $v$ of length at least~$d$. In particular, all internal vertices of these paths are not free. Thus, the number of non-free vertices of $G$ is at least~$k(d-1)$. Therefore, we have 
\[
n\geq f+k(d-1) \geq f+d-2+k
\]
where the last inequality follows from~(\ref{d-2+k}). Hence we get the desired inequality. The lower bound~$3$ is also achieved by $f(G)=0$, $\diam(G)=2$ and $\kappa(G)=1$, where the last two follow since $G$ is non-complete and connected, respectively.  
\end{proof}

In \cite{partI} all graphs $G$ on $n$ vertices for which $\phi(G)=n+2$ were characterized. Namely, the case in which $\phi(G)$ is maximized has been characterized. It is now interesting to characterize the case in which $\phi(G)$ is minimized, i.e. $\phi(G)=3$.

First we recall the {\em join product} of two vertex disjoint graphs $G_1$ and $G_2$ which is denoted by $G_1*G_2$. The vertex set of $G_1*G_2$ is just $V(G_1)\cup V(G_2)$ and its edge set is 
\[
E(G_1*G_2)=E(G_1)\cup E(G_2)\cup \{\{i,j\}: i\in V(G_1), j\in V(G_2)\}.
\]

Also recall that a vertex $v$ of a connected graph $G$ is called a {\em cut vertex} if the graph $G-v$ is disconnected. In this paper, we denote the complete graph with $n$ vertices by $K_n$.

\begin{Theorem}\label{phi=3}
 Let $G$ be a finite simple non-complete connected graph on $[n]$. Then the following hold:
 \begin{enumerate}
 	\item Let $n<9$. Then there is no graph $G$ with $\phi(G)=3$.
 	\item Let $n\geq 9$. Then $\phi(G)=3$ if and only if $G=K_1*H$ where $H$ is a disconnected graph with no free vertex. 
 \end{enumerate} 
\end{Theorem}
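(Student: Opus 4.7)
The plan is to exploit the trivial lower bounds $f(G)\geq 0$, $\diam(G)\geq 2$ (since $G$ is non-complete) and $\kappa(G)\geq 1$ (since $G$ is connected), which give $\phi(G)\geq 3$ immediately; consequently $\phi(G)=3$ is equivalent to the three simultaneous equalities $f(G)=0$, $\diam(G)=2$ and $\kappa(G)=1$, so the task reduces to classifying graphs realizing this triple.

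The structural heart of the argument is to show that $\kappa(G)=1$ together with $\diam(G)=2$ already forces $G=K_1*H$ for some disconnected graph $H$. I would fix a cut vertex $v$ and consider any two vertices $x,y$ lying in distinct components of $G-v$; every $(x,y)$-path in $G$ passes through $v$, so $\dist_G(x,y)\geq 2$, and together with $\diam(G)=2$ this yields $\dist_G(x,y)=2$, with the internal vertex of the length-two path forced to be $v$. Hence both $x$ and $y$ are neighbors of $v$. Applying this argument to an arbitrary $w\in V(G)\setminus\{v\}$ paired with some $y$ in a component of $G-v$ different from the one containing $w$ shows that every vertex is adjacent to $v$; thus $v$ is universal and $G=K_1*H$ with $H=G-v$ disconnected.

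Next, I would translate the remaining condition $f(G)=0$ into a condition on $H$. The vertex $v$ has $V(H)$ as its neighborhood in $G$, so $v$ is free in $G$ iff $H$ is complete, which is excluded by disconnectedness. For any $u\in V(H)$, one has $N_G(u)=\{v\}\cup N_H(u)$, and since $v$ is adjacent to every element of $N_H(u)$, the vertex $u$ is free in $G$ iff $N_H(u)$ is a clique in $H$, i.e., iff $u$ is free in $H$. Hence $f(G)=0$ iff $H$ has no free vertex, giving one direction of~(2). The converse is a direct verification: any $G=K_1*H$ with $H$ disconnected and $f(H)=0$ satisfies $\kappa(G)=1$ (the apex is a cut vertex), $\diam(G)=2$ (distinct vertices in $V(H)$ are joined by a length-two path through the apex), and $f(G)=0$ by the same translation.

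Finally, the numerical part~(1) and the existence part of~(2) both hinge on the observation that every graph on at most three vertices has a free vertex: an isolated vertex or a degree-one vertex is free (vacuously, resp.\ with at most one neighbor), $K_3$ has every vertex free, and any remaining three-vertex graph contains a leaf or an isolated vertex. Thus each component of $H$ must contain at least four vertices, forcing $|V(H)|\geq 8$ and hence $n\geq 9$, which proves~(1). For $n\geq 9$, existence is immediate by taking $G=K_1*(C_4\sqcup C_{n-5})$, since every cycle $C_m$ with $m\geq 4$ has no free vertex. The only genuinely nontrivial step is the forcing argument in the second paragraph that every vertex is adjacent to the cut vertex; the remaining steps are routine verification.
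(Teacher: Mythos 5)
Your proof is correct and follows essentially the same route as the paper's: reduce $\phi(G)=3$ to the triple $(f,d,k)=(0,2,1)$, use the cut vertex together with $\diam(G)=2$ to force the join structure $G=K_1*H$ with $H$ disconnected, transfer the no-free-vertex condition between $G$ and $H$, and conclude $|V(H_i)|\geq 4$ for each component, hence $n\geq 9$. Your write-up is somewhat more complete than the paper's (you verify both directions of the freeness equivalence and supply an explicit existence example), but the underlying argument is the same.
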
 

\begin{proof}
Let $G$ be a finite simple non-complete connected graph with $\phi(G)=3$, which implies that $f(G)=0$, $\kappa(G)=1$ and $\diam(G)=2$. Since $\kappa(G)=1$, $G$ has a cut vertex $v$. Then $G-v$ is a disconnected graph with the connected components $H_1,\ldots,H_s$ with $s\geq 2$. Since $\diam(G)=2$, it follows that any vertex in $H_i$ for $i=1,\ldots,s$ is adjacent to the vertex $v$ in $G$. This implies that $G=v*(G-v)$. If $H_i$ for some $i=1,\ldots,s$ has a free vertex, then according to the construction of the join product, this vertex is a free vertex in $G$ as well, which is a contradiction to the assumption $f(G)=0$. Thus, none of $H_i$'s has a free vertex, and hence $|V(H_i)|\geq 4$ for each $i=1,\ldots,s$. This argument in particular implies that for $n<9$, there is no desired graph and for $n\geq 9$ the statement~(2) of the theorem holds.   
\end{proof}

\section{Possible values of $\phi(G)$}\label{possible valus}

After considering the minimum and maximum possible values for $\phi(G)$, it natural to ask if any possible number between $3$ and $n+2$ can be achieved by the invariant $\phi(G)$ for some graph $G$. Motivated by a conjecture on Cohen-Macaulay property of binomial edge ideals of graphs posed by Bolognini et al. in \cite{BMS}, and the fact that any connected graph with Cohen-Macaulay binomial edge ideal has connectivity equal to~1, we provide the desired graphs $G$ to answer the aforementioned question with $\kappa(G)=1$.    

Let $G$ and $H$ be two graphs on disjoint sets of vertices. Then we denote the {\em disjoint union} of $G$ and $H$ by $G\cup H$ which is the graph on the vertex set $V(G)\cup V(H)$ and the edge set $E(G)\cup E(H)$.  

Let $G$ be a graph on $[n]$. Then we say that a graph $G'$ is obtained by {\em duplicating} a vertex, say~$n$, if $V(G')=V(G)\cup \{n'\}$ where $n'\notin V(G)$ and 
\[
E(G')=E(G)\cup \{\{j,n'\}:j\in N_G(n)\}
\] where $N_G(n)$ is the set of all neighbors, i.e.  the adjacent vertices, of $n$ in $G$.  

\begin{Theorem}\label{any number}
	Given any integer $n\geq 8$, for any integer $i$ with $4\leq i\leq n+1$, there exists a finite simple non-complete connected graph $G$ on $[n]$ such that $\kappa(G)=1$ and $\phi(G)=i$.  
\end{Theorem}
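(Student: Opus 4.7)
The plan is to exhibit, for each $i$ with $4 \le i \le n+1$, an explicit graph $G$ on $[n]$ satisfying $\kappa(G)=1$ and $\phi(G)=i$. I split the range into two regimes handled by two families: a ``diameter $2$'' join family for $4 \le i \le n-2$, and a ``cycle plus pendant path(s)'' family for $i \in \{n-1,\, n,\, n+1\}$.

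For $4 \le i \le n-2$, take
\[
G \;=\; K_1 * \bigl((i-3)\,K_1 \,\cup\, C_{n-i+2}\bigr),
\]
the join of a single vertex $v$ with the disjoint union of $i-3 \ge 1$ isolated vertices and a cycle of length $n-i+2 \ge 4$. Since $(i-3)K_1 \cup C_{n-i+2}$ is disconnected, $v$ is a cut vertex and $\kappa(G) = 1$; the join structure forces $\diam(G)=2$; and the free vertices are precisely the $i-3$ vertices coming from the $K_1$ summands, because cycle vertices have two non-adjacent cycle-neighbors and $v$ is not free (its neighborhood is disconnected). The vertex count is $n$, so $\phi(G) = (i-3)+2+1 = i$.

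For $i \in \{n-1,\, n\}$, take $G$ to be a cycle $C_k$ together with a pendant path $c_1 - v_1 - \cdots - v_m$ attached at a single cycle vertex $c_1$, where $k+m = n$, $k \ge 4$, and $m \ge 1$. Then $c_1$ is a cut vertex, the only free vertex is the pendant end $v_m$, and the diameter equals $\lfloor k/2 \rfloor + m$, so $\phi(G) = 2 + \lfloor k/2 \rfloor + m$. The choices $(k,m) = (4, n-4)$ and $(k,m) = (5, n-5)$ yield $\phi = n$ and $\phi = n-1$, respectively. For $i = n+1$, attach instead two pendant paths to antipodal vertices of $C_4 = c_1c_2c_3c_4$, namely $c_1 - u_1 - \cdots - u_a$ and $c_3 - w_1 - \cdots - w_b$, with $a, b \ge 1$ and $a+b = n-4$. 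Now the free vertices are $u_a$ and $w_b$, the cut vertex is $c_1$, and the diameter equals $d(u_a, w_b) = a + 2 + b = n-2$, whence $\phi(G) = 2+(n-2)+1 = n+1$.

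The real work is just routine case-checking of each construction: verify the vertex count, identify the cut vertex, enumerate the free vertices, and compute the diameter. I do not foresee any conceptual difficulty; the hypothesis $n \ge 8$ comfortably guarantees that every construction is non-degenerate, in particular that each cycle used has length at least $4$ and each pendant path has positive length.
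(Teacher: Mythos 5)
Your constructions are all correct, and the overall strategy (explicit witnesses, split by ranges of $i$) is the same as the paper's; indeed your graphs for $i=4$, $i=n-1$ and $i=n$ coincide with the paper's (the join $K_1*(K_1\cup C_{n-2})$, and a $C_5$ resp.\ $C_4$ with one pendant path). Where you genuinely diverge is in the middle range and at the top. For $5\leq i\leq n-2$ you use the single uniform family $K_1*\bigl((i-3)K_1\cup C_{n-i+2}\bigr)$, which realizes $\phi=i$ with $\delta=(i-3,2,1)$, i.e.\ by piling up free vertices while keeping the diameter at $2$; the paper instead walks the diameter up, using a path $P_i$ with duplicated vertices to get $\delta=(0,i-1,1)$ for $6\leq i\leq n-2$. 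For $i=n+1$ you attach two pendant paths to antipodal vertices of a $C_4$, giving $\delta=(2,n-2,1)$, whereas the paper's Figure~\ref{diam=n-1} gives $\delta=(2,n-1,1)$. All of your diameter, connectivity and free-vertex counts check out (the cycle lengths are always at least $4$, so no cycle vertex becomes simplicial after the join, and the hypothesis $n\geq 8$ keeps every parameter in range), so your argument is a complete proof of the theorem as stated. The one thing your route gives up is collateral information used later in the paper: the specific triples $\delta(G)$ produced in the paper's proof (e.g.\ $(2,2,1)$ for $i=5$ and $(0,i-1,1)$ for the middle range) are explicitly cited in Examples~\ref{i=4}--\ref{i=6} and in Theorem~\ref{kappa=1} as entries of the eventual classification table, so if you adopted your witnesses you would need to supply those particular triples elsewhere. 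Your uniform join family does, however, have the virtue of handling the whole range $4\leq i\leq n-2$ with one two-line verification instead of three separate cases.
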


\begin{proof}
	First assume that $i=4$. Let $C_{n-2}$ be the $(n-2)$-cycle. Then, let $G=K_1*(C_{n-2}\cup K_1)$. Then $f(G)=1$, $\diam(G)=2$, $\kappa(G)=1$ and $\phi(G)=4$. 
	
	Next, suppose that $i=5$. Let $C_{n-3}$ be the $(n-3)$-cycle. Then, let $G=K_1*(C_{n-3}\cup K_2)$. Then $f(G)=2$, $\diam(G)=2$, $\kappa(G)=1$ and $\phi(G)=5$.   
	 
	Now, assume that $6\leq i\leq n-2$. Let $P_i:1\ldots,i$ be the path graph. Let $G$ be the graph on $[n]$ obtained from $P_i$ by duplicating the vertex~$2$ and $n-i-1$ times duplicating the vertex $i-1$ in $P_i$.
	Then $f(G)=0$, $\diam(G)=i-1$, $\kappa(G)=1$ and $\phi(G)=i$. 
	
	Next assume that $i=n-1$. Then, let $G$ be the graph depicted in Figure~\ref{diam=n-3}. Then $f(G)=1$, $\diam(G)=n-3$, $\kappa(G)=1$ and $\phi(G)=n-1$.
	
	Now suppose that $i=n$. Then, let $G$ be the graph depicted in Figure~\ref{diam=n-2}. Then $f(G)=1$, $\diam(G)=n-2$, $\kappa(G)=1$ and $\phi(G)=n$.
	
	Finally, assume that $i=n+1$. Then, let $G$ be the graph depicted in Figure~\ref{diam=n-1}. Then $f(G)=2$, $\diam(G)=n-2$, $\kappa(G)=1$ and $\phi(G)=n+1$.      
\end{proof}

\begin{figure}
	\centering
	\begin{tikzpicture}[scale=0.8]
\draw [line width=1.5pt] (-4,2.63)-- (-4,1);
\draw [line width=1.5pt] (-4,1)-- (-6,1);
\draw [line width=1.5pt] (-6,1)-- (-6.02,2.63);
\draw [line width=1.5pt] (-4,2.63)-- (-4,1);
\draw [line width=1.5pt] (-4,1)-- (-6,1);
\draw [line width=1.5pt] (-4,1)-- (-2,1);
\draw [line width=1.5pt] (-2,1)-- (0,1);
\draw [line width=1.5pt] (2,1)-- (4,1);
\draw [line width=1.5pt] (-5,4)-- (-6.02,2.63);
\draw [line width=1.5pt] (-5,4)-- (-4,2.63);
\begin{scriptsize}
	\draw [fill=black] (-6.02,2.63) circle (2.5pt);
	\draw [fill=black] (-4,2.63) circle (2.5pt);
	\draw [fill=black] (-4,1) circle (2.5pt);
	\draw [fill=black] (-6,1) circle (2.5pt);
	\draw [fill=black] (-2,1) circle (2.5pt);
	\draw [fill=black] (0,1) circle (2.5pt);
	\draw [fill=black] (2,1) circle (2.5pt);
	\draw [fill=black] (4,1) circle (2.5pt);
	\draw [fill=black] (0.64,0.99) circle (1pt);
	\draw [fill=black] (1,0.99) circle (1pt);
	\draw [fill=black] (1.32,0.99) circle (1pt);
	\draw [fill=black] (-5,4) circle (2.5pt);
\end{scriptsize}
\end{tikzpicture}
\caption{The graph $G$ on $[n]$ with $\delta(G)=(1,n-3,1)$}
\label{diam=n-3}
\end{figure}
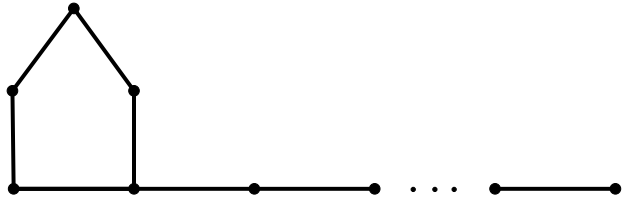

\begin{figure}
	\centering
	\begin{tikzpicture}[scale=0.8]
	\draw [line width=1.5pt] (-6,3)-- (-4,3);
	\draw [line width=1.5pt] (-4,3)-- (-4,1);
	\draw [line width=1.5pt] (-4,1)-- (-6,1);
	\draw [line width=1.5pt] (-6,1)-- (-6,3);
	\draw [line width=1.5pt] (-6,3)-- (-4,3);
	\draw [line width=1.5pt] (-4,3)-- (-4,1);
	\draw [line width=1.5pt] (-4,1)-- (-6,1);
	\draw [line width=1.5pt] (-4,1)-- (-2,1);
	\draw [line width=1.5pt] (-2,1)-- (0,1);
	\draw [line width=1.5pt] (2,1)-- (4,1);
	\begin{scriptsize}
	\draw [fill=black] (-6,3) circle (2.5pt);
	\draw [fill=black] (-4,3) circle (2.5pt);
	\draw [fill=black] (-4,1) circle (2.5pt);
	\draw [fill=black] (-6,1) circle (2.5pt);
	\draw [fill=black] (-2,1) circle (2.5pt);
	\draw [fill=black] (0,1) circle (2.5pt);
	\draw [fill=black] (2,1) circle (2.5pt);
	\draw [fill=black] (4,1) circle (2.5pt);
	\draw [fill=black] (0.64,0.99) circle (1pt);
	\draw [fill=black] (1,0.99) circle (1pt);
	\draw [fill=black] (1.32,0.99) circle (1pt);
	\end{scriptsize}
	\end{tikzpicture}
\caption{The graph $G$ on $[n]$ with $\delta(G)=(1,n-2,1)$}
\label{diam=n-2}
\end{figure}

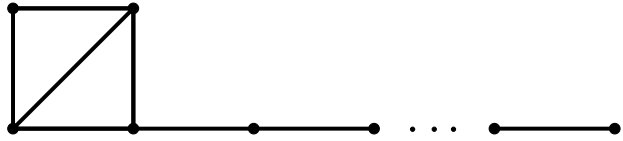
\begin{figure}
	\centering
	\begin{tikzpicture}[scale=0.8]
	\draw [line width=1.5pt] (-6,3)-- (-4,3);
	\draw [line width=1.5pt] (-4,3)-- (-4,1);
	\draw [line width=1.5pt] (-4,1)-- (-6,1);
	\draw [line width=1.5pt] (-6,1)-- (-6,3);
	\draw [line width=1.5pt] (-6,3)-- (-4,3);
	\draw [line width=1.5pt] (-4,3)-- (-4,1);
	\draw [line width=1.5pt] (-4,1)-- (-6,1);
	\draw [line width=1.5pt] (-4,1)-- (-2,1);
	\draw [line width=1.5pt] (-2,1)-- (0,1);
	\draw [line width=1.5pt] (2,1)-- (4,1);
	\draw [line width=1.5pt] (-4,3)-- (-6,1);
	\begin{scriptsize}
	\draw [fill=black] (-6,3) circle (2.5pt);
	\draw [fill=black] (-4,3) circle (2.5pt);
	\draw [fill=black] (-4,1) circle (2.5pt);
	\draw [fill=black] (-6,1) circle (2.5pt);
	\draw [fill=black] (-2,1) circle (2.5pt);
	\draw [fill=black] (0,1) circle (2.5pt);
	\draw [fill=black] (2,1) circle (2.5pt);
	\draw [fill=black] (4,1) circle (2.5pt);
	\draw [fill=black] (0.64,0.99) circle (1pt);
	\draw [fill=black] (1,0.99) circle (1pt);
	\draw [fill=black] (1.32,0.99) circle (1pt);
	\end{scriptsize}
		\end{tikzpicture}
	\caption{The graph $G$ on $[n]$ with $\delta(G)=(2,n-1,1)$}
	\label{diam=n-1}
	\end{figure}

In the proof of Theorem~\ref{any number}, in the case of $i=4$, we provided a graph $G$ with $n\geq 8$ vertices with $\delta(G)=(1,2,1)$.
In the next example, we also provide graphs $G$ with all other possible  triples $\delta(G)=(f(G),\diam(G),\kappa(G))$ which yield $\phi(G)=4$.   
	
\begin{Example}\label{i=4}
\begin{enumerate}
	\item Let $n\geq 8$ and let $C_{n-4}$ be the $(n-4)$-cycle on $[n-4]$ and $P_3:n-3,n-2,n-1$ be the path graph. Then, let $H=n*(C_{n-4}\cup P_3)$ and let $G$ be the graph on $[n]$ obtained from $H$ by removing the edge $\{n-2,n\}$. 
	Then $\delta(G)=(0,3,1)$ and $\phi(G)=4$. 
	\item Let $n\geq 4$ and let $G=K_{2,n-2}$ be the complete bipartite graph. Then $\delta(G)=(0,2,2)$ and $\phi(G)=4$. 
\end{enumerate}
\end{Example}

In the proof of Theorem~\ref{any number}, in the case of $i=5$, we provided a graph $G$ on $n\geq 8$ vertices with $\delta(G)=(2,2,1)$. In the next example, we also provide graphs $G$ with all other possible triples $\delta(G)$ which yield $\phi(G)=5$.   

\begin{Example}\label{i=5}
	\begin{enumerate}
		\item Let $n\geq 6$ and let $K_{2,n-4}$ be the complete bipartite graph on the bipartition of its vertex set $\{1,2\}\cup \{3,4,\ldots,n-2\}$. Then we set $G$ to be the graph on $[n]$ obtained by adding the edges $\{1,n-1\}$, $\{1,n\}$, $\{2,n\}$ and $\{n-1,n\}$ to $K_{2,n-4}$. Then $\delta(G)=(1,2,2)$ and $\phi(G)=5$. 
		
		\item Let $n\geq 5$ and let $K_{2,n-4}$ be the complete bipartite graph on the bipartition of its vertex set $\{1,2\}\cup \{3,4,\ldots,n-2\}$. Then we set $G$ to be the graph on $[n]$ obtained by adding the edges $\{1,n\}$, $\{2,n\}$ and $\{n-1,n\}$ to $K_{2,n-4}$. Then $\delta(G)=(1,3,1)$ and $\phi(G)=5$. 
		
		\item Let $n\geq 6$ and let $G=K_{3,n-3}$ be the complete bipartite graph. Then $\delta(G)=(0,2,3)$ and $\phi(G)=5$.
		
		\item Let $n\geq 7$ and let $K_{2,n-6}$ be the complete bipartite graph on the bipartition of its vertex set $\{1,2\}\cup \{3,4,\ldots,n-4\}$. Then we set $G$ to be the graph on $[n]$ obtained by adding the edges $\{1,n-2\}$, $\{2,n-2\}$, $\{n-3,n-2\}$, $\{n-2,n-1\}$, $\{n-1,n\}$ and $\{n-3,n\}$ to $K_{2,n-6}$. Then $\delta(G)=(0,4,1)$ and $\phi(G)=5$.
		
		\item Let $n\geq 6$ and let $K_{2,n-5}$ be the complete bipartite graph on the bipartition of its vertex set $\{1,2\}\cup \{3,4,\ldots,n-3\}$. Then we set $G$ to be the graph on $[n]$ obtained by adding the edges $\{1,n-1\}$, $\{2,n-2\}$, $\{n-1,n\}$ and $\{n-2,n\}$ to $K_{2,n-3}$. Then $\delta(G)=(0,3,2)$ and $\phi(G)=5$.  
	\end{enumerate}
\end{Example}

In the proof of Theorem~\ref{any number}, in the case of $i=6$, we provided a graph $G$ with $n\geq 8$ vertices with $\delta(G)=(0,5,1)$. In the next example, we also provide graphs $G$ with all other possible triples $\delta(G)$ which yield $\phi(G)=6$.

\begin{Example}\label{i=6}
	\begin{enumerate}
		\item Let $n\geq 8$ and let $G=K_1*(K_3\cup C_{n-4})$. Then $\delta(G)=(3,2,1)$ and $\phi(G)=6$. 
		
		\item Let $n\geq 8$ and let $H=1*C_{n-4}$ on the vertex set $\{1,2,\ldots,n-3\}$. Then we set $G$ to be the graph on $[n]$ obtained by adding the edges $\{1,n-2\}$, $\{n-2,n\}$, $\{n-2,n-1\}$ and $\{n-1,n\}$ to $H$. Then $\delta(G)=(2,3,1)$ and $\phi(G)=6$.  
		
		\item Let $n\geq 8$ and let $G=K_2*(C_{n-4}\cup K_1\cup K_1)$.
		Then $\delta(G)=(2,2,2)$ and $\phi(G)=6$.  
		
        \item Let $n\geq 8$ and let $H=1*C_{n-4}$ with the vertex set $\{1,2,\ldots,n-3\}$. Then we set $G$ to be the graph on $[n]$ obtained by adding the edges $\{1,n-2\}$, $\{n-2,n-1\}$ and $\{n-1,n\}$ to $H$. Then $\delta(G)=(1,4,1)$ and $\phi(G)=6$. 
        
        \item Let $n\geq 8$ and let $G=K_3*(C_{n-4}\cup K_1)$.
        Then $\delta(G)=(1,2,3)$ and $\phi(G)=6$.
        
        \item Let $n\geq 6$ and let $K_{2,n-5}$ be the complete bipartite graph with the bipartition of its vertex set $\{1,2\}\cup \{3,4,\ldots,n-3\}$. Then we set $G$ to be the graph on $[n]$ obtained by adding the edges $\{1,n-1\}$, $\{2,n-2\}$, $\{n-2,n-1\}$, $\{n-2,n\}$ and $\{n-1,n\}$ to $H$. Then $\delta(G)=(1,3,2)$ and $\phi(G)=6$.
        
        \item Let $n\geq 8$ and let $G=K_{4,n-4}$. Then $\delta(G)=(0,2,4)$ and $\phi(G)=6$.
        
        \item Let $n\geq 8$ and let $C_8$ be the $8$-cycle on $[8]$. Then we set $G$ to be the graph on $[n]$ obtained by $n-8$ times duplicating the vertex~$1$. 
        Then $\delta(G)=(0,4,2)$ and $\phi(G)=6$.
        
        \item Let $n\geq 8$ and let $K_{3,n-7}$ be the complete bipartite graph with the bipartition of its vertex set $\{1,2,3\}\cup \{4,\ldots,n-4\}$. Then we set $G$ to be the graph on $[n]$ obtained by adding the edges $\{n-3,n-2\}$, $\{n-2,n-1\}$, $\{n-j,n\}$ and $\{j,n-j\}$ for each $j=1,2,3$ to $H$. Then $\delta(G)=(0,3,3)$ and $\phi(G)=6$.
	\end{enumerate}
\end{Example}

\section{The complete list of possible $\delta(G)$}\label{complete list}

In this section we make the complete list of all possible $\delta(G)$. First we start with the case $\kappa(G)=1$.  

\begin{Theorem}\label{kappa=1}
 Fix integers $n\geq 8$, $f\geq 0$ and $d\geq 2$. Then there exists a finite simple non-complete connected graph $G$ on $[n]$ with $\delta(G)=(f,d,1)$ if and only if one of the following conditions is satisfied:   
 \begin{enumerate}
 	\item $f=0$ and $2\leq d\leq n-3$;
 	\item $f=1$  and $2\leq d\leq n-2$;
 	\item $f\geq 2$, $d\geq2$ and  $f+d\leq n+1$.
 \end{enumerate}
\end{Theorem}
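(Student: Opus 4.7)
The plan is to prove both directions separately, using the inequality from Section~\ref{bound and minimum} for necessity and parametric constructions for sufficiency.

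\emph{Necessity.} Applying Theorem~\ref{Combinatorial proof} with $k=1$ gives the bound $f+d\leq n+1$, which is precisely condition~(3) when $f\geq 2$. It remains to exclude the three exceptional triples $(0,n-1,1)$, $(1,n-1,1)$ and $(0,n-2,1)$. If $d=n-1$, any diametrical path visits all $n$ vertices and the shortest-path property forbids every long chord; hence $G=P_n$ and $f(G)=2$, which rules out both $f=0$ and $f=1$. If $d=n-2$ and $f=0$, fix a diametrical path $u_0,\ldots,u_{n-2}$ and let $w$ be the lone off-path vertex. A shortcut estimate forces the index set $S=\{i:u_i\in N_G(w)\}$ to lie inside some window $\{i,i+1,i+2\}$, and $w$ avoids being free only when $S=\{i,i+2\}$ or $\{i,i+1,i+2\}$. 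A short case check on whether $0$ and $n-2$ belong to this window then shows that, for $n\geq 8$, at least one endpoint of the path is either a leaf or has a clique neighborhood, hence is free---contradicting $f=0$.

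\emph{Sufficiency.} Many triples are already realized by Theorem~\ref{any number} and Examples~\ref{i=4}--\ref{i=6}; the remaining ones I handle by two generic templates. For $d=2$, take $G=K_1*H$ with $H$ a disconnected graph on $n-1$ vertices. The central vertex is a cut vertex, so $\kappa(G)=1$; the join forces $\diam(G)=2$; and one checks directly that a vertex of $H$ is free in $G$ iff it is free in $H$. Selecting $H$ among unions of the shape $C_a\cup K_b$ for moderate $f$, $K_a\cup P_b$-hybrids for $f$ close to $n$, and $C_4\cup C_{n-5}$ for $f=0$ (which requires $n\geq 9$) then realizes every admissible $f$. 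For $d\geq 3$, I start from a diametrical path $u_0,\ldots,u_d$. When $f\geq 2$, I attach $f-2$ leaves to $u_1$ and pad out with ``triple-attachers'' (vertices adjacent to three consecutive interior path-vertices), which are non-free because their neighborhoods contain a non-edge $u_iu_{i+2}$. When $f\in\{0,1\}$, I first destroy the freeness of $u_0$ by adjoining $w_0$ with $N(w_0)=\{u_0,u_2\}$, and (for $f=0$) symmetrically adjoin $w_d$ with $N(w_d)=\{u_{d-2},u_d\}$; the detour $u_0$--$w_0$--$u_2$ has the same length as $u_0$--$u_1$--$u_2$, so the diameter is preserved.

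The main obstacle is the bookkeeping in the $f\in\{0,1\}$ constructions: I must verify that $w_0$, $w_d$, and all triple-attachers remain non-free (using that the underlying path stays induced, so each chord $u_iu_{i+2}$ is absent) and that the resulting graph still has a cut vertex. The small diameters $d=3$ and $d=4$ require slight modifications, since the triple-attacher template can accidentally restore $2$-connectivity; in those boundary cases I would fall back on Example~\ref{i=4}--type join constructions to supply the missing triples.
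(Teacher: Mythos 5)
Your proposal follows essentially the same route as the paper: necessity via the bound $f+d\leq n+1$ (Theorem~\ref{Combinatorial proof} with $k=1$) plus exclusion of the three boundary triples, and sufficiency via path-based constructions padded with non-free attached vertices. Your ``triple-attachers'' and the gadgets $w_0,w_d$ are minor variants of the paper's vertex duplications (a duplicate of $u_1$ is exactly your $w_0$), and the boundary cases $d\in\{3,4\}$ that you defer are indeed supplied by Examples~\ref{i=4}--\ref{i=6}, so the deferral is legitimate. You are actually more careful than the paper on necessity: the paper merely asserts that $f=0$ forbids $d\in\{n-2,n-1\}$ and $f=1$ forbids $d=n-1$, whereas you prove it (the $d=n-1$ case forcing $G=P_n$, and the window argument for the lone off-path vertex when $d=n-2$).

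One point deserves emphasis. Your parenthetical remark that the $f=0$, $d=2$ construction ``requires $n\geq 9$'' is not a defect of your particular choice of $H$: by Theorem~\ref{phi=3}(1), \emph{no} graph on $n=8$ vertices has $\phi(G)=3$, so the triple $(0,2,1)$ is unrealizable for $n=8$ even though condition~(1) of the statement admits it. This is an inconsistency in the theorem as stated (the paper's own proof silently defers this case to Theorem~\ref{phi=3} as well); your write-up brushes against it without resolving it, and a complete proof would have to either exclude $(f,d)=(0,2)$ for $n=8$ from condition~(1) or explain why it is attainable, which it is not.
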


\begin{proof}
 Note that if $f(G)=0$, then $\diam(G)$ is never equal to~$n-1$ or $n-2$. Also, if $f(G)=1$, then $\diam(G)\neq n-1$. Thus, if $\delta(G)=(f,d,1)$ for some non-complete connected graph $G$, then one of the conditions~(1), (2), (3) holds. For the converse, in the following, in each case~(1), (2), (3) we construct a desired graph. 
 
 (1) Assume that $f=0$ and $2\leq d\leq n-3$. If $d=2,3$, then $\phi(G)=3,4$ which was discussed in Theorem~\ref{phi=3} and Example~\ref{i=4}. Now, let $4\leq d\leq n-3$. Let $P_{d+1}: 1,2, \ldots, d+1$ be the path graph. Then, let $G$ be the graph obtained by duplicating the vertex~$2$ and $n-d-2$ times duplicating the vertex~$d$. Then, $\delta(G)=(0,d,1)$.
 
 (2) Assume that $f=1$  and $2\leq d\leq n-2$. If $d=2$, then $\phi(G)=4$ which was discussed in Theorem~\ref{any number}. So, let $3\leq d\leq n-2$. Let $P_{d+1}: 1,2, \ldots, d+1$ be the path graph. Then, let $G$ be the graph obtained by $n-d-1$ times duplicating the vertex~$2$. Then, $\delta(G)=(1,d,1)$. 
 
 (3) Assume that $f\geq 2$, $d\geq2$ and  $f+d\leq n+1$. First assume that $f+d=n+1$. Then, let $P_{d+1}:1,2,\ldots, d+1$ be the path graph and let $G$ be the graph obtained from $P_{d+1}$ by $f-2$ times duplicating the vertex $d+1$. Then $\delta(G)=(f,d,1)$. This case was also considered in \cite[Theorem~1.4]{partI}. So, we assume that $f+d\leq n$. 
 
 If $f=2$ and $d=2$, then $\phi(G)=5$ which was discussed in Theorem~\ref{any number}. Now, let $f\geq 3$ and $d=2$. Let $K_{1,n-1}$ be the complete bipartite graph with the bipartition of its vertex set $\{1\}\cup \{2, 3, \ldots,n\}$. Let $G$ be the graph on $[n]$ which is obtained from $K_{1,n-1}$ by adding the edges $\{j,j+1\}$ for each $j=2,\ldots, n-f+1$. Then, $\delta(G)=(f,2,1)$. 
 
 Now, let $f\geq 2$, $d\geq3$ and  $f+d\leq n-2$. Let $P_{d+1}:1,2,\ldots,d+1$ be the path graph. Then, let $H$ be the graph on $[d+2]$ which is obtained by adding the edges $\{d,d+2\}$ and $\{d+1,d+2\}$ to $P_{d+1}$. Now, let $G$ be the graph on $[n]$ obtained from $H$ by $f-1$ times duplicating the vertex $d+2$ and by $n-d-f-1$ times duplicating the vertex~$2$ in $H$. Then, $\delta(G)=(f,d,1)$.  
    
 Next, let $f\geq 2$, $d\geq3$ and  $f+d=n-1$. Let $P_{d+1}:1,2,\ldots,d+1$ be the path graph. Then, let $G$ be the graph on $[n]$ obtained from $P_{d+1}$ by $f-1$ times duplicating the vertex $d+1$ and by duplicating the vertex~$2$. Then, $\delta(G)=(f,d,1)$.       
 
 Finally, let $f\geq 2$, $d\geq3$ and  $f+d=n$. Let $P_{d+1}:1,2,\ldots,d+1$ be the path graph. Then, let $H$ be the graph on $[d+2]$ obtained from $P_{d+1}$ by duplicating~$2$ and adding the edge $\{2,d+2\}$. Then, let $G$ be the graph on $[n]$ obtained from $H$ by $f-2$ times duplicating the vertex $d+1$. Then, $\delta(G)=(f,d,1)$.        
\end{proof}

Next, we consider the case $\diam(G)=2$.  

\begin{Theorem}\label{diam=2}
	Fix integers $n\geq 8$, $f\geq 0$ and $k\geq 2$. Then there exists a finite simple non-complete connected graph $G$ on $[n]$ with $\delta(G)=(f,2,k)$ if and only if one of the following conditions is satisfied:   
	\begin{enumerate}
		\item $f=0$ and $2\leq k\leq n-2$;
		\item $f=1$  and $2\leq k\leq n-3$;
		\item $f\geq 2$, $k\geq2$ and  $f+k\leq n$.
	\end{enumerate}
\end{Theorem}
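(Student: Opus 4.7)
The plan is to establish necessity directly from Theorem~\ref{Combinatorial proof} together with a short degree argument for the extremal case $\kappa(G)=n-2$, and then to prove sufficiency by exhibiting explicit graphs in each of the three cases, in the spirit of the proof of Theorem~\ref{kappa=1}.

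For necessity, suppose $G$ is a finite simple non-complete connected graph on $[n]$ with $\delta(G)=(f,2,k)$. Setting $d=2$ in Theorem~\ref{Combinatorial proof} gives $f+k\leq n$, and non-completeness of $G$ forces $k\leq n-2$. It remains to rule out $f=1$ when $k=n-2$. Using $\kappa(G)\leq\delta(G)$, if $k=n-2$ then every vertex has $G$-degree at least $n-2$, so the complement $\overline{G}$ has maximum degree at most $1$ and is therefore a nonempty matching $M$. A short inspection shows that $f(G)=2$ when $|M|=1$ (the two endpoints of the unique missing edge are free in $G$), and $f(G)=0$ when $|M|\geq 2$ (any vertex $v$ has two non-adjacent $G$-neighbors, namely the endpoints of any edge of $M$ not incident to $v$). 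Hence $f=1$ is impossible for $k=n-2$, and the three listed conditions are necessary.

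For sufficiency, I construct an explicit witness $G$ in each case. In case~(1), take $G$ to be the complete multipartite graph on $[n]$ with part sizes $(n-k,\,2,\,1,\,\ldots,\,1)$ containing $k-2$ ones; since $k\leq n-2$, the largest part has size $n-k\geq 2$ and at least two parts have size $\geq 2$, so $G$ has no free vertex, while $\diam(G)=2$ and $\kappa(G)=n-(n-k)=k$. In case~(2), for $2\leq k\leq n-5$ take $G=K_k*(K_1\cup C_{n-k-1})$, whose unique free vertex is the isolated vertex of the $K_1$-component; for the boundary values $k\in\{n-4,n-3\}$ take $G$ to be the complement of a small tree (a $P_5$ when $k=n-3$, and a suitable tree on six vertices when $k=n-4$) together with sufficiently many isolated vertices in $\overline{G}$, chosen so that exactly one vertex $v$ satisfies the property ``$\{v\}\cup N_{\overline{G}}(v)$ is a vertex cover of $\overline{G}$'', which is the combinatorial criterion for $v$ being free in $G$. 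In case~(3), when $f+k=n$ take $G=K_k*\overline{K_f}$; when $f+k\leq n-4$ take $G=K_k*(\overline{K_f}\cup C_{n-k-f})$, in which the free vertices are exactly the $f$ isolated vertices of $\overline{K_f}$; for $f+k\in\{n-3,n-2,n-1\}$ with $f\geq 3$ replace the cycle component by a path $P_5$, $P_4$, or $P_3$ respectively and decrease the number of additional isolated vertices of $\overline{K_{\cdot}}$ accordingly; and for the residual cases $f=2$ with $k\in\{n-5,n-4,n-3\}$ use small-complement constructions analogous to those in case~(2).

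The main obstacle is the boundary regime in which the natural cycle component $C_{n-k-f}$ degenerates because no graph on at most three vertices has zero free vertices; this forces the sufficiency proof to split into several sub-cases. In every construction, the key verification is that $\kappa(G)$ equals $k$ exactly rather than some larger value. For the join-type constructions $K_k*H$ with $H$ disconnected, $V(K_k)$ is a cut of size $k$, and $\kappa(G)\geq k$ follows from Menger's theorem since every pair of vertices admits at least $k$ internally disjoint paths through the clique $V(K_k)$. For the small-complement constructions, the minimum cut is $N_G(v)$ where $v$ is a vertex of maximum $\overline{G}$-degree, and $|N_G(v)|=n-1-\Delta(\overline{G})$ is engineered by the construction to equal $k$.
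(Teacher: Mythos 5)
Your necessity argument is complete and correct, and is in fact a little cleaner than the paper's: the paper also reduces to excluding $f=1$, $k=n-2$, but does so by taking a free vertex and producing a second one, whereas your complement-is-a-matching analysis (showing $k=n-2$ forces $f\in\{0,2\}$) is tidier. On the sufficiency side your generic constructions also check out and differ pleasantly from the paper's, which builds everything as a join $K_k*(\cdots)$ with a few edges deleted: your complete multipartite graph with parts $(n-k,2,1,\ldots,1)$ handles all of case~(1) uniformly, and $K_k*(\overline{K_f}\cup C_{n-k-f})$ with the cycle replaced by a short path in the near-extremal range handles case~(3) for $f\geq 3$, with the free-vertex and connectivity verifications being routine ($V(K_k)$ is a $k$-cut, and any surviving vertex of $K_k$ dominates the rest).

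The genuine gap is in the boundary sub-cases that you defer to ``a suitable tree on six vertices'' and to ``small-complement constructions analogous to those in case~(2)'', namely $(f,k)=(1,n-4)$ and $(f,k)=(2,n-5),(2,n-4),(2,n-3)$. These are precisely the cases the paper treats with separate explicit graphs, and they cannot be waved away: not every candidate works (for instance $\overline{G}=K_{1,3}\cup\overline{K_{n-4}}$ has $\Delta(\overline{G})=3$ but gives four free vertices, and the spider $S(1,1,2,2)$ gives only one), so an existence claim requires an actual witness. Moreover, your stated criterion ``$\kappa(G)=n-1-\Delta(\overline{G})$'' only gives the upper bound $\kappa(G)\leq\delta(G)$ for free; the matching lower bound needs a Menger-type count of internally disjoint paths between each non-adjacent pair, which for some of the required graphs uses one path that does not pass through a common neighbor (e.g.\ for $\overline{G}=P_5\cup\overline{K_{n-5}}$ the pair corresponding to the middle edge of $P_5$ has only $n-4$ common neighbors and needs one longer path to reach $n-3$). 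The missing constructions do exist within your framework --- e.g.\ the spider $S(1,2,2)$ for $(1,n-4)$, $P_4$ for $(2,n-3)$, the chair (a $P_4$ with a pendant at a degree-$2$ vertex) for $(2,n-4)$, and a double star for $(2,n-5)$, each padded with isolated vertices in $\overline{G}$ --- but until they are exhibited and their connectivity verified, the sufficiency direction is incomplete.
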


\begin{proof}
Note that for any non-complete connected graph $G$, we have $\kappa(G)\leq n-2$. If $f(G)=1$, then $\kappa(G)\leq n-3$. In fact, if $\kappa(G)=n-2$, then each vertex of $G$ is adjacent to at least $n-2$ other vertices in $G$. Suppose that the vertex~$1$ is free. If $1$ is adjacent to all other vertices of $G$, then $G$ must be complete graph, since $1$ is a free vertex. Thus, we may assume that there is no edge connecting the vertices~$1$ and~$2$ in $G$. This implies that $2$ is adjecent to all vertices other than~$1$ which means that $2$ is also a free vertex of $G$. This contradicts the assumption that $f(G)=1$. Thus, if $\delta(G)=(f,2,k)$ for some non-complete connected graph $G$, then one of the conditions~(1), (2), (3) holds. For the converse, in the following, in each case~(1), (2), (3) we construct a desired graph. 

(1) Assume that $f=0$ and $2\leq k\leq n-5$. If $k=2$, then $\phi(G)=4$ and the result follows from Example~\ref{i=4}. So, let $3\leq k\leq n-5$. Let $K_k$ be the complete graph on $[k]$, and let $P_{n-k-2}: k+1,k+2, \ldots, n-2$ and $P_2:n-1,n$ be path graphs. Then, let $H=K_k*(P_{n-k-2}\cup P_2)$. Then, let $G$ be the graph on $[n]$ obtained by removing the edges $\{1,k+2\}$, $\{2,n-3\}$, $\{1,n-1\}$ and $\{2,n\}$ from $H$. Then, $\delta(G)=(0,2,k)$. Note that if $n-k=5$, then the vertices $k+2$ and $n-3$ coincide. 

Now, assume that $f=0$ and $k= n-4$. Let $K_{n-4}$ be the complete graph on $[n-4]$, let $P_2: n-3,n-2$ and $P'_2:n-1,n$ be path graphs. Then, let $H=K_k*(P_2\cup P'_2)$. Then, let $G$ be the graph on $[n]$ obtained by removing the edges $\{1,n-3\}$, $\{2,n-2\}$, $\{1,n-1\}$ and $\{2,n\}$ from $H$. Then, $\delta(G)=(0,2,n-4)$. 

Next, assume that $f=0$ and $k= n-3$. Let $K_{n-3}$ be the complete graph on $[n-3]$ and let $P_3:n-2, n-1, n$ be the path graph. Let $H=K_{n-3}*P_3$. Then let $G$ be the graph on $[n]$ obtained by removing the edges $\{1,n-2\}$, $\{2,n-1\}$, and $\{3,n\}$ from $H$. Then, $\delta(G)=(0,2,n-3)$.    

Next, assume that $f=0$ and $k= n-2$. Let $K_{n-2}$ be the complete graph on $[n-2]$ and let $P_2:n-1, n$ be the path graph. Let $H=K_{n-2}*P_2$. Then let $G$ be the graph on $[n]$ obtained by removing the edges $\{1,n-1\}$ and $\{2,n\}$ from $H$. Then, $\delta(G)=(0,2,n-2)$. 

(2) Assume that $f=1$  and $2\leq k\leq n-4$. If $k=2$, then $\phi(G)=5$, and hence the result follows from Example~\ref{i=5}. Now suppose that $3\leq k\leq n-4$. Let $K_k$ be the complete graph on $[k]$ and let $P_{n-k-1}: k+1,k+2, \ldots, n-1$ be the path graph. Then, let $H=K_k*(P_{n-k-1}\cup n)$. Then, let $G$ be the graph on $[n]$ obtained by removing the edges $\{1,k+2\}$ and $\{2,n-2\}$ from $H$. Then, $\delta(G)=(1,2,k)$. Note that if $n-k=4$, then the vertices $k+2$ and $n-2$ coincide.

Now, assume that $f=1$ and $k= n-3$. Let $K_{n-3}$ be the complete graph on $[n-3]$, let $P_2: n-2,n-1$ be the path graph. Then, let $H=K_{n-3}*(P_2\cup n)$. Then, let $G$ be the graph on $[n]$ obtained by removing the edges $\{1,n-2\}$ and $\{2,n-1\}$ from $H$. Then, $\delta(G)=(1,2,n-3)$.  

(3) First assume that $f=2$ and $2\leq k\leq n-4$. Then let $K_k$ be the complete graph on $[k]$, let $P_{n-k-2}: k+1,k+2,\ldots,n-2$ and $P_2: n-1,n$ be the path graphs. Then, let $H=K_k*(P_{n-k-2}\cup P_2)$. Then, let $G$ be the graph on $[n]$ obtained by removing the edges $\{1,n-1\}$ and $\{2,n\}$ from $H$. Then, $\delta(G)=(2,2,k)$. 

Now, assume that $f=2$ and $k=n-3$. Then let $K_{n-3}$ be the complete graph on $[n-3]$ and let $P_{3}: n-2, n-1, n$ be the path graph. Then, let $H=K_{n-3}*P_3$. Then, let $G$ be the graph on $[n]$ obtained by removing the edges $\{n-1,n-2\}$ and $\{1,n\}$ from $H$. Then, $\delta(G)=(2,2,n-3)$.

Next, assume that $f=2$ and $k=n-2$. Then, let $G=K_{n-2}*(K_1\cup K_1)$. Then, $\delta(G)=(2,2,n-2)$.    

Finally, assume that $f\geq 3$ and $2\leq k\leq n-f$. Then let $K_k$ be the complete graph on $[n]\setminus [n-k]$ and let $C_{n-k}$ be the $(n-k)$-cycle on the vertex set $[n-k]$. Let $H=K_k*C_{n-k}$. Then let $G$ be the graph on $[n]$ obtained by removing the edges $\{j,j+1\}$ for each $j=1,\ldots, f-1$ from $H$. Then, $\delta(G)=(f,2,k)$. 
\end{proof}

Finally, we consider $\kappa(G)\geq 2$ and $\diam (G)\geq 3$. 

\begin{Theorem}\label{main}
 Fix integers $f\geq 0$, $d\geq 3$ and $k\geq 2$. Then there exists a finite simple non-complete connected graph $G$ on $[n]$ with $\delta(G)=(f,d,k)$ if and only if 
 \[
 n\geq k(d-1)+\max \{2,f\}.
 \]
\end{Theorem}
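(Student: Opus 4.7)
The plan is to establish both implications. For necessity, I would refine the argument in the proof of Theorem~\ref{Combinatorial proof}: choosing $u,v\in V(G)$ with $\dist_G(u,v)=d$ and invoking $k$-connectivity, we obtain $k$ internally disjoint $(u,v)$-paths of length at least $d$, hence at least $k(d-1)$ internal vertices, all of which are non-free (each has two non-adjacent neighbors on its own path). This yields both $n\geq f+k(d-1)$ (since the $f$ free vertices are disjoint from these internal vertices) and $n\geq 2+k(d-1)$ (counting $u$, $v$, and the internal vertices together), which combine to give $n\geq k(d-1)+\max\{2,f\}$.

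For sufficiency, I would produce an explicit family of ``multi-layered theta-type'' graphs. The construction uses two distinguished vertices $u,v$, layers $V_1,V_2,\ldots,V_{d-1}$ with $|V_i|=k$ for $2\leq i\leq d-1$, and (when $f\geq 3$) an additional set $W=\{w_1,\ldots,w_{f-2}\}$; the size $|V_1|$ is chosen so that the total vertex count equals $n$. The backbone edges are: $u$ complete to $V_1$; $V_i$ complete bipartite to $V_{i+1}$ for $1\leq i\leq d-2$; $V_{d-1}$ complete to $v$. On top of this backbone I would add, successively: all edges inside $V_{d-1}$ if $f\geq 1$ (making $V_{d-1}$ a clique, hence $v$ free); all edges inside $V_1$ if $f\geq 2$ (making $V_1$ a clique, hence $u$ free); and all edges turning $V_{d-1}\cup\{v\}\cup W$ into a clique if $f\geq 3$ (making each $w_i$ free). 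The hypothesis $n\geq k(d-1)+\max\{2,f\}$ is precisely what guarantees $|V_1|\geq k$.

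Next I would verify the three invariants. The diameter equals $d$, since every $(u,v)$-path must traverse all $d-1$ layers in sequence and the added clique edges produce no shortcut. The free vertices are exactly $v$ (when $f\geq 1$), $u$ (when $f\geq 2$), and the $w_i$'s (when $f\geq 3$), totalling $f$; every remaining vertex lies in some $V_i$ and exhibits two non-adjacent neighbors drawn from distinct layers of the backbone. Finally, $\kappa(G)=k$: the minimum degree is $k$, attained at $v$ when $f\leq 2$ and at $u$ when $f\geq 3$ and $|V_1|=k$, and Menger's criterion is satisfied using the $k$ parallel column-paths through the layers.

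The main obstacle is the connectivity verification in its full generality: one must exhibit $k$ internally disjoint paths between every pair of vertices, which splits into several cases depending on which structural region (endpoints $u$ or $v$, a particular layer, or $W$) each vertex lies in. The backbone's $k$ column paths provide a natural family that extends through the added cliques to cover all the mixed cases, but the case-by-case bookkeeping is nontrivial. The freeness verification, while more routine, also demands care to ensure that every layer vertex remains non-free after the clique edges inside $V_1$, $V_{d-1}$, and around $W$ are added.
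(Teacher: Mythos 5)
Your necessity argument is essentially the paper's: it also picks a diametral pair, invokes $k$-connectivity to get $k$ internally disjoint paths of length at least $d$, and counts the $k(d-1)$ internal vertices as non-free, with the two endpoints absorbed into the $\max\{2,f\}$ term. One small point of care: for an internal vertex to have two non-adjacent neighbours on its own path you should pass to \emph{induced} paths (shortcut each Menger path; this preserves internal disjointness and, since the endpoints realize the diameter, each shortcut still has length at least $d$) --- the paper says ``induced'' explicitly, and your parenthetical needs this. For sufficiency your construction genuinely differs from the paper's. The paper builds a theta graph: $k$ vertex-disjoint paths of length $d-2$ glued to two hub vertices, then pads to $n$ vertices by \emph{duplicating} a vertex adjacent to a hub, and tunes $f$ by completing the neighbourhoods of the hubs and attaching extra simplicial vertices along an edge of one path. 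You instead blow up a path into layers $V_1,\dots,V_{d-1}$ joined by complete bipartite graphs, absorb the slack $n-k(d-1)-\max\{2,f\}$ into $|V_1|$, and tune $f$ by turning $V_1$, $V_{d-1}$ and $V_{d-1}\cup\{v\}\cup W$ into cliques. Your version makes the diameter and the free-vertex count almost immediate (every layer vertex sees two non-adjacent neighbours in adjacent layers), at the price of a more delicate Menger verification, since the ``columns'' are no longer literal disjoint paths; the paper's theta-plus-duplication approach hands you the $k$ disjoint paths for free but pushes the work into checking that duplication and the added cliques do not create unwanted free vertices or shortcuts. Both routes are valid, and your arithmetic check that $n\geq k(d-1)+\max\{2,f\}$ is exactly the condition $|V_1|\geq k$ is the right way to see where the bound enters.
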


\begin{proof}
First let $G$ be a non-complete connected graph on $[n]$ with $\delta(G)=(f,d,k)$ where $f\geq 0$, $d\geq 3$ and $k\geq 2$. Let $Q_1:1,2,\ldots,d+1$ be a longest path in $G$. Since $G$ is $k$-connected, there exist $k$ internally disjoint induced paths between the vertices $1$ and $d+1$, say $Q_1,\ldots, Q_k$. Since $d=\diam (G)$, each $Q_i$ for $i=2,...,k$ has length at least~$d$. Since none of the internal vertices of each $Q_i$ for $i=1,\ldots,k$ is a free vertex, it follows that the number of non-free vertices of $G$ is at least $k(d-1)$. Note that the vertices $1$ and $d+1$ can be free or not. Then, we deduce that
\[
n\geq k(d-1)+\max \{2,f\}. 
\]     

Conversely, suppose that $n\geq k(d-1)+\max \{2,f\}$. Let $Q_1, Q_2,\ldots, Q_k$ be path graphs of length~$d-2$ on disjoint sets of vertices. Let $i$ and $i+k$ be the leaves of $Q_i$ for each $i=1,\ldots,k$. Let $H=Q_1\cup Q_2\cup \ldots \cup Q_k$ with $V(H)=[k(d-1)]$. Then, let $H'$ be the graph on $[k(d-1)+2]$ obtained by adding the edges $\{j,k(d-1)+1\}$ and $\{k+j,k(d-1)+2\}$ for each $j=1,\ldots, k$. 

First assume that $f=0$. Then, let $G$ be the graph on $[n]$ obtained by $n-(k(d-1)+2)$ times duplicating the vertex~$1$ in $H'$. Then, $\delta(G)=(0,d,k)$.

Next assume that $f=1$. Then, let $G$ be the graph on $[n]$ obtained by $n-(k(d-1)+2)$ times duplicating the vertex~$1$ in $H'$ and by adding the edges $\{k+i,k+j\}$ for all $1\leq i<j \leq k$. Then, the vertex $k(d-1)+2$ is the only free vertex of $G$, and hence $\delta(G)=(1,d,k)$. 

Finally, assume that $f\geq 2$. Let $e=\{s,t\}$ be an edge of $Q_k$. Then, let $H''$ be the graph on $[k(d-1)+3]$ by adding the edge $\{s,k(d-1)+3\}$ and $\{t,k(d-1)+3\}$ to $H'$. Then let $G$ be the graph on $[n]$ obtained from $H''$ by $f-3$ times duplicating the vertex $k(d-1)+3$ and $n-(k(d-1)+f)$ times duplicating the vertex~$1$ and then by connecting all the neighbors of the vertex $k(d-1)+1$ in the new graph to each other by an edge as well as connecting all the neighbors of the vertex $k(d-1)+2$ to each other. Then, $\delta(G)=(f,d,k)$.      
\end{proof}

In the next two corollaries we consider, in particular, the characterizations due to $\phi(G)=n+2$ and $\phi(G)=n+1$ to recover \cite[Theorem~1.4]{partI} and \cite[Theorem~3.3]{Indian}, respectively.
  
\begin{Corollary}\label{Ours}
Fix integers $n\geq 8$, $f\geq 0$, $d\geq 2$ and $k\geq 1$. Then there exists a finite simple non-complete connected graph $G$ on $[n]$ with $\phi(G)=n+2$ and $\delta(G)=(f,d,k)$ if and only if $f+d+k=n+2$ and one of the following conditions is satisfied:   
\begin{enumerate}
	\item $f\geq 2$, $k\geq 1$ and $d=2$;
	\item $f\geq 2$, $k=1$ and $d\geq 2$.
\end{enumerate}
\end{Corollary}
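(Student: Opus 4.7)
The plan is to deduce this corollary directly from the three classification theorems of this section (Theorems~\ref{kappa=1}, \ref{diam=2} and \ref{main}), after observing that $\phi(G)=n+2$ is merely the equation $f+d+k=n+2$. So the task is to determine, among all triples $(f,d,k)$ satisfying this equation, which are realizable; equivalently, I will run the three theorems against this linear constraint.

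For the ``only if'' direction, I would partition the possibilities into three cases: (A) $k=1$, (B) $k\geq 2$ and $d=2$, and (C) $k\geq 2$ and $d\geq 3$. In case (A), Theorem~\ref{kappa=1} says that $f=0$ forces $d\leq n-3$ and $f=1$ forces $d\leq n-2$; in either subcase $f+d+k\leq n$, so the equality $f+d+k=n+2$ is impossible. Thus $f\geq 2$ and $f+d\leq n+1$, and equality must hold, which is precisely condition~(2). In case (B), Theorem~\ref{diam=2} forces $f\geq 2$ and $f+k=n$ by the same mechanism, giving condition~(1) with $k\geq 2$.

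Case (C) is the step I expect to require the most care, since here Theorem~\ref{main} must rule out all triples rather than just pin one down. The theorem supplies the inequality $n\geq k(d-1)+\max\{2,f\}$, and I would substitute $n=f+d+k-2$ into it and split on whether $f\geq 2$ or $f\leq 1$. In the first subcase the inequality collapses to $(k-1)(d-2)\leq 0$, which contradicts $k\geq 2$ and $d\geq 3$; in the second subcase a short rearrangement yields $d(k-1)\leq 2k-3$, which likewise fails for $k\geq 2$ and $d\geq 3$. This eliminates case~(C) entirely and completes the necessity argument.

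For the ``if'' direction the work is essentially already done. Assume $f+d+k=n+2$ with either (1) or (2) holding. If $d=2$ and $k\geq 2$, then $f\geq 2$ and $f+k=n$, so Theorem~\ref{diam=2}(3) provides a graph with $\delta(G)=(f,2,k)$. If $k=1$, then $f\geq 2$, $d\geq 2$ and $f+d=n+1$, so Theorem~\ref{kappa=1}(3) provides a graph with $\delta(G)=(f,d,1)$. The overlap at $(k,d)=(1,2)$ is covered by either construction, so every admissible triple is realized and the corollary is proved.
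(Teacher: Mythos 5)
Your proposal is correct and follows essentially the same route as the paper: the key step in both is to exclude the case $k\geq 2$, $d\geq 3$ by substituting $n=f+d+k-2$ into the inequality of Theorem~\ref{main}, and then to read off the remaining admissible triples (and their realizability) from Theorems~\ref{kappa=1} and~\ref{diam=2}. The only difference is one of presentation: the paper handles the exclusion in a single line via $(k-1)(d-2)\leq f-\max\{2,f\}$ and leaves the $k=1$ and $d=2$ cases implicit, whereas you split on $f\geq 2$ versus $f\leq 1$ and spell out the remaining cases explicitly.
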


\begin{proof}
 First suppose that $G$ is a graph on $[n]$ with $\phi(G)=n+2$, $f=f(G)\geq 0$, $k=\kappa(G)\geq 2$ and $d=\diam (G)\geq 3$. Then by Theorem~\ref{main}, we have 
 \[
 f+d+k-2\geq k(d-1)+\max \{2,f\}.
 \] 
 Thus, we have
 \[
 (k-1)(d-2)\leq f-\max \{2,f\},
 \]  
 which is impossible. Therefore, if $\phi(G)=n+2$, then $k=1$ or $d=2$, and hence the result follows from Theorem~\ref{kappa=1} and Theorem~\ref{diam=2}.  
\end{proof}

\begin{Corollary}\label{Indian people}
	Fix integers $n\geq 8$, $f\geq 0$, $d\geq 2$ and $k\geq 1$. Then there exists a finite simple non-complete connected graph $G$ on $[n]$ with $\phi(G)=n+1$ and $\delta(G)=(f,d,k)$ if and only if $f+d+k=n+1$ and one of the following conditions is satisfied:   
	\begin{enumerate}
		\item $f\geq 2$, $k\geq 1$ and $d=2$;
		\item $f\geq 2$, $k=1$ and $d\geq 2$;
		\item $f\geq 2$, $k=2$ and $d=3$.
	\end{enumerate}
\end{Corollary}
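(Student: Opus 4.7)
The plan is to imitate the casework of Corollary~\ref{Ours}. Given a non-complete connected graph $G$ on $[n]$ with $\phi(G)=n+1$, so that $f+d+k=n+1$, I would split on whether $(k,d)$ lies on the ``boundary'' $\{k=1\}\cup\{d=2\}$ or is an interior point with $k\geq 2$ and $d\geq 3$. The two boundary cases fall directly under Theorems~\ref{kappa=1} and~\ref{diam=2}; the interior case is where the new work lies.

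In the boundary case $k=1$, one has $f+d=n$, and Theorem~\ref{kappa=1} rules out $f=0$ (which would require $d\leq n-3$) and $f=1$ (which would require $d\leq n-2$), leaving $f\geq 2$. This is precisely condition~(2). In the boundary case $d=2$, $k\geq 2$, one has $f+k=n-1$, and Theorem~\ref{diam=2} rules out $f=0$ (which would require $k\leq n-2$) and $f=1$ (which would require $k\leq n-3$), leaving $f\geq 2$: this is condition~(1).

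For the interior case $k\geq 2$, $d\geq 3$, I would substitute $n=f+d+k-1$ into the existence bound $n\geq k(d-1)+\max\{2,f\}$ of Theorem~\ref{main} and rearrange to
\[
(k-1)(d-2)\leq 1-\max\{0,\,2-f\}.
\]
Since $k-1\geq 1$ and $d-2\geq 1$, the left side is at least~$1$, which forces $\max\{0,2-f\}=0$, i.e.\ $f\geq 2$, and then $(k-1)(d-2)\leq 1$ forces $k=2$ and $d=3$. This recovers condition~(3), with equality in Theorem~\ref{main} since then $n=k(d-1)+f=4+f$.

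For the converse, each triple $(f,d,k)$ satisfying~(1), (2) or~(3) with $n=f+d+k-1\geq 8$ meets the existence hypothesis of the relevant one of Theorems~\ref{kappa=1}, \ref{diam=2} or~\ref{main}, so the constructions given in those theorems produce the required graph; no new construction is needed. The main obstacle is the arithmetic manipulation in the interior case: the $\max\{2,f\}$ term must be tracked carefully, and the key subtlety is that the bound $(k-1)(d-2)\leq 1$ permits the nontrivial solution $(k,d)=(2,3)$, which is precisely the extra possibility not captured by either boundary and is the whole reason condition~(3) appears alongside~(1) and~(2).
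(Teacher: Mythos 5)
Your proposal is correct and follows essentially the same route as the paper: the interior case $k\geq 2$, $d\geq 3$ is handled by substituting $n=f+d+k-1$ into the bound of Theorem~\ref{main} to obtain $(k-1)(d-2)\leq f+1-\max\{2,f\}$, which forces $f\geq 2$, $k=2$, $d=3$, and the boundary cases together with all converse constructions are delegated to Theorems~\ref{kappa=1}, \ref{diam=2} and~\ref{main}. The only difference is that you spell out the elimination of $f=0,1$ in the boundary cases, which the paper leaves implicit.
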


\begin{proof}
	First suppose that $G$ is a graph on $[n]$ with $\phi(G)=n+1$, $f=f(G)\geq 0$, $k=\kappa(G)\geq 2$ and $d=\diam (G)\geq 3$. Then by Theorem~\ref{main}, we have 
	\[
	f+d+k-1\geq k(d-1)+\max \{2,f\}.
	\] 
	Thus, we have
	\[
	(k-1)(d-2)\leq f-\max \{2,f\}+1,
	\]  
	which is possible if and only if $f\geq 2$, $d=3$ and $k=2$. Therefore, if $\phi(G)=n+1$, then $k=1$ or $d=2$ or $f\geq 2$, $d=3$ and $k=2$. Then the result follows from Theorem~\ref{kappa=1}, Theorem~\ref{diam=2} and Theorem~\ref{main}.  
\end{proof}

\section*{Acknowledgment}

The present paper was completed while the authors stayed at Mathematisches Forschungsinstitut in Oberwolfach, August~18 to~30, 2024, in the frame of the Oberwolfach Research Fellows. Sara Saeedi Madani was in part supported by a grant from IPM (No. 1403130020).

\end{document}